\newtheorem{theorem}{Theorem}
\newtheorem{corollary}{Corollary}
\newtheorem{remark}[corollary]{Remark}
\newtheorem{definition}[corollary]{Definition}
\newcommand{\R}{\mathbb{R}}
\numberwithin{equation}{section}
\def\H{{\mbox{\script H}\;}}
\def\N{\mathbb N}
\def\R{\mathbb R}
\def\H{\mathcal H}
\def\e{\varepsilon}
\def\O{\Omega}
\def\ov{\overline}
\def\S{\Sigma}
\def\bal{\begin{aligned}}
\def\eal{\end{aligned}}
\newcommand{\Per}{{\rm Per}}
\newcommand\LL{\mathcal{L}}
\newcommand\EE{\mathscr{E}}
\newcommand{\MS}{{\rm MS}}
\title[Hausdorff dimension of the singular set of Mumford-Shah 
minimizers]{A note on the Hausdorff dimension of the singular set 
for minimizers of the Mumford-Shah energy}
\author{C. De Lellis}
\address{Universit\"at Z\"urich, Winterthurerstrasse 190, CH-8057 Z\"urich}
\email{camillo.delellis@math.uzh.ch}
\author{M. Focardi}
\address{Universit\`a di Firenze, DiMaI ``U. Dini'', V.le Morgagni 67/A, 
I-50134 Firenze}
\email{focardi@math.unifi.it}
\author{B. Ruffini}
\address{Scuola Normale Superiore di Pisa, Piazza dei Cavalieri 4, I-56126 Pisa}
\email{berardo.ruffini@sns.it}
\begin{document}

\begin{abstract}
We give a more elementary proof of a result by Ambrosio, Fusco and Hutchinson to estimate 
the Hausdorff dimension of the singular set of minimizers of the Mumford-Shah energy
(see \cite[Theorem~5.6]{AFH}). On the one hand, we follow the strategy of the above mentioned paper; 
but on the other hand our analysis greatly simplifies the argument since it relies 
on the compactness result proved by the first two Authors in \cite[Theorem~13]{DeLF} for sequences of 
local minimizers with vanishing gradient energy, and the regularity theory of minimal 
Caccioppoli partitions, rather than on the corresponding results for Almgren's area 
minimizing sets.
\end{abstract}

\maketitle

\section{Introduction}
Consider the (localized) Mumford-Shah energy on a bounded open subset $\O\subset\R^n$ 
given by  
\begin{equation}\label{e:MSenergy}
\MS(v,A)=\int_A|\nabla v|^2dx+\H^{n-1}(S_v\cap A),\qquad
\text{for } v\in SBV(\O) \text{ and } A\subseteq\O \text{ open.} 
\end{equation}
In what follows if $A=\Omega$ we shall drop the dependence on the set of integration. 
We refer to the book \cite{AFP00} for all the notations and preliminaries on $SBV$ functions 
and the regularity theory for local minimizers of the Mumford-Shah energy giving precise 
references when needed.

In this note we provide a simplified proof of the following result due to Ambrosio, Fusco 
and Hutchinson \cite[Theorem 5.6]{AFH} (established there for quasi-minimizers as well).
\begin{theorem}\label{t:main}
Let $u$ be a local minimizer of the Mumford-Shah energy, i.e. 
any function $u\in SBV(\O)$ with $\MS(u)<\infty$ and such that
\[
\MS(u)\leq \MS(w)\quad \text{ whenever } \{w\neq u\}\subset\subset\O.
\]
Let $\Sigma_u\subseteq \ov{S_u}$ be the set of points out of 
which $\ov{S_u}$ is locally regular, and let
\[
\Sigma^\prime_u:=\left\{x\in\Sigma_u:\, 
\lim_{\rho\downarrow 0}\rho^{1-n}\int_{B_\rho(x)}|\nabla u|^2=0\right\}.
\]
Then, $\mathrm{dim}_{\H}\S^\prime_u\le n-2$. 

\end{theorem}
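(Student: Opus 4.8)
The plan is to carry out a Federer-type dimension reduction argument, using the compactness theorem \cite[Theorem~13]{DeLF} for local minimizers with vanishing gradient energy in place of the blow-up and regularity theory for Almgren area-minimizing sets. Concretely, I would fix $x_0\in\S^\prime_u$ and consider the rescalings $u_{x_0,\rho}(y):=\rho^{-1/2}u(x_0+\rho y)$, which are again local minimizers of $\MS$ on $B_{R/\rho}$ whenever $B_R(x_0)\subset\subset\O$. The scaling identity $\int_{B_r}|\nabla u_{x_0,\rho}|^2=r^{n-1}(r\rho)^{1-n}\int_{B_{r\rho}(x_0)}|\nabla u|^2$ and the very definition of $\S^\prime_u$ give $\int_{B_r}|\nabla u_{x_0,\rho}|^2\to0$ as $\rho\downarrow0$, for every fixed $r$. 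Since the density bounds for $\ov{S_u}$ (Ahlfors regularity) make the rescalings equibounded in energy on compact sets, \cite[Theorem~13]{DeLF} applies: along a subsequence $\rho_k\downarrow0$ one has $u_{x_0,\rho_k}\to u_0$ with $u_0$ locally piecewise constant, $S_{u_0}$ the interface of a minimal Caccioppoli partition of $\R^n$, and --- the crucial point --- no loss of area in the limit, i.e. $\H^{n-1}(S_{u_0}\cap B_r)=\lim_k\H^{n-1}(S_{u_{x_0,\rho_k}}\cap B_r)$ for a.e. $r>0$.

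Next I would show that $u_0$ is a non-trivial minimal cone. The interface of a minimal Caccioppoli partition induces a stationary integral varifold, for which the monotonicity formula holds; combined with the absence of area loss from the previous step this forces $r\mapsto r^{1-n}\H^{n-1}(S_{u_0}\cap B_r)$ to be constant, hence $u_0$ is $0$-homogeneous and $S_{u_0}$ is a minimal cone. The lower density bound for $\ov{S_u}$, transferred through the no-loss-of-area, gives $S_{u_0}\neq\emptyset$, while $x_0\in\S_u$ together with the $\e$-regularity theorem for $\MS$ (small normalized energy, or $\e$-closeness to a hyperplane with small Dirichlet part, implying local $C^{1,\g}$ regularity of $\ov{S_u}$) rules out that $S_{u_0}$ be a hyperplane. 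Therefore the spine $V:=\{v\in\R^n:\, S_{u_0}=S_{u_0}+v\}$ of the cone is a proper subspace, and since $\dim V=n-1$ would force $S_{u_0}$ to be flat, every point of $\S^\prime_u$ admits a blow-up cone with $\dim V\le n-2$.

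I would then run Federer's dimension reduction lemma. Stratify $\S^\prime_u=\bigcup_{0\le d\le n-2}\S^\prime_{u,d}$ according to the largest spine dimension attained by the blow-ups at a point; the standard iteration shows $\dim_\H\S^\prime_{u,d}\le d$, the only non-formal inputs being that blowing up a minimal Caccioppoli partition cone at a singular point off its spine again yields such a cone, with strictly larger spine, and that quotienting a minimal partition cone by its spine leaves a minimal Caccioppoli partition in the complementary Euclidean space --- so the extremal configuration is precisely the codimension-two triple-junction cone $\R^{n-2}\times(\text{three half-lines at }120^{\circ}\text{ in }\R^2)$. Taking the union over the finitely many values of $d$ gives $\dim_\H\S^\prime_u\le n-2$.

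The step I expect to be the real obstacle is the passage to the limit in the first paragraph: one needs not merely $L^1_{loc}$-convergence of the rescalings, but that the limit be a genuine minimal Caccioppoli partition and that the jump measures converge \emph{with no loss of area}. This is exactly the content of \cite[Theorem~13]{DeLF}, and it is what makes it possible to transfer regularity from the blow-up back to $\ov{S_u}$ via $\e$-regularity and to iterate inside Federer's scheme --- it is the place where Almgren's area-minimizing theory is replaced. The subsidiary issues --- measurability of the strata $\S^\prime_{u,d}$ and upper semicontinuity of the relevant densities along the above convergence --- are routine once this compactness is in hand.
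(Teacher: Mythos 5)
There is a genuine gap at the heart of your second paragraph: the claim that the blow-up $S_{u_0}$ is a cone. Constancy of $r\mapsto r^{1-n}\H^{n-1}(S_{u_0}\cap B_r)$ does not follow from monotonicity of the limit varifold plus ``no loss of area''; unwinding the scaling, it is equivalent to the existence of the density $\lim_{\rho\downarrow 0}\rho^{1-n}\H^{n-1}(S_u\cap B_\rho(x_0))$ for the \emph{original} minimizer, and that would require a monotonicity formula for the Mumford--Shah energy at $x_0$, which is not available (the closing remark of the paper states explicitly that topological or measure-theoretic arguments are used precisely ``to compensate for monotonicity formulas''). Without coneness the spine stratification of $\S^\prime_u$ in your third paragraph has no starting point, so the dimension reduction cannot be run directly on $\S^\prime_u$. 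A second, smaller issue: you dismiss as routine the upper semicontinuity linking $\rho_h^{-1}\S^\prime_u$ to the singular set $\Sigma_\EE$ of the limit partition, but this is one of the two substantive steps of the actual proof; it requires the Ambrosio--Fusco--Pallara characterization $\Sigma_u=\{x:\liminf_\rho(\mathscr{D}(x,\rho)+\mathscr{A}(x,\rho))\ge\e_0\}$, through which smallness of the mean-flatness of $J_\EE$ near a regular point is transferred back to the approximating rescaled minimizers to exclude that singular points accumulate there.

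The paper's route avoids both problems by never stratifying $\S^\prime_u$ itself. It argues by contradiction: if $\H^s(\S^\prime_u)>0$ for some $s>n-2$, then at $\H^s$-a.e.\ point the upper $\H^s_\infty$-density of $\S^\prime_u$ is at least $\omega_s/2^s$; blowing up at such a point via \cite[Theorem~13]{DeLF} produces a minimal Caccioppoli partition $\EE$ (not necessarily a cone), the upper semicontinuity above shows $\rho_h^{-1}\S^\prime_u\cap\ov{B_1}$ is eventually contained in any open neighborhood of $\Sigma_\EE$, and the resulting bound $\H^s(\Sigma_\EE\cap\ov{B_1})\ge\omega_s/2^{s+1}>0$ contradicts $\mathrm{dim}_{\H}\Sigma_\EE\le n-2$. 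That last estimate is where Federer reduction actually enters --- but applied to the class of minimal Caccioppoli partitions (via Simon's abstract Theorem~A.4), where blow-ups do converge to minimizing cones and the excess characterization of $\Sigma_\EE$ gives the needed semicontinuity. So your instinct about the key inputs (the compactness theorem and the regularity theory of minimal partitions) is right, but the dimension bound must be proved for partitions and then \emph{pulled back} to $\S^\prime_u$ by a density argument, rather than established on $\S^\prime_u$ by a cone-based stratification.
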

The main interest in establishing such an estimate on the set $\S^\prime_u$, the so called subset of triple-junctions, 
is related to the understanding of the Mumford-Shah conjecture (see \cite[Chapter~6]{AFP00} for a related discussion, 
see also \cite[Section~7]{DeLF}). 

Indeed, Theorem~\ref{t:main}, together with the higher integrability property of the approximate gradients enjoyed by 
minimizers as established in $2$-dimensions by \cite{DeLF} and more recently in any dimension by \cite{DePF}, imply 
straightforwardly an analogous estimate on the full singular set $\Sigma_u$. 
More precisely, in view of \cite[Theorem~1]{DeLF} and \cite[Theorem~1.1]{DePF} any local minimizer $u$ of the Mumford-Shah 
energy is such that $|\nabla u|\in L^p_{\mathrm{loc}}(\O)$ for some $p>2$, therefore \cite[Corollary~5.7]{AFH} yields that
\[
\mathrm{dim}_{\H}\S_u\le \max\{n-2,n-p/2\}.
\]
A characterization (of a suitable version) of the Mumford-Shah conjecture in $2$-dimensions in terms of a refined higher 
integrability property of the gradient in the finer scale of weak Lebesgue spaces has been recently established in 
\cite[Proposition~5]{DeLF}.

Our proof of Theorem~\ref{t:main} rests on a compactness result proved 
by  the first two Authors (see \cite[Theorem~13]{DeLF}) 
showing that the blow-up limits of the jump set $S_u$ in points in the 
regime of small gradients, i.e. in points of $\Sigma^\prime_u$, are minimal 
Caccioppoli partitions.
The original approach in \cite{AFH}, instead, relies on the notion of
Almgren's area mimizing sets, for which an interesting but technically 
demanding analysis of the composition of $SBV$ functions with Lipschitz 
deformations (not necessarily one-to-one) and a revision of the regularity 
theory for those sets are needed (cp. with \cite[Sections~2, 3 and 4]{AFH}).

Given \cite[Theorem~13]{DeLF}, the regularity theory of minimal Caccioppoli 
partitions developed in \cite{MT,L1,L2} and standard arguments in geometric measure 
theory yield the conclusion, thus bypassing the above mentioned technical complications.

We describe briefly the plan of the note: in Section~\ref{s:cacc} we introduce 
necessary definitions and recall some well-known facts about Caccioppoli partitions.
In Section~\ref{s:stimasing} we prove our main result and comment on some related improvements 
in a final remark.

\section{Caccioppoli partitions}\label{s:cacc}

\noindent In what follows $\O\subset\R^n$ will denote a bounded open set. 
\begin{definition}\label{d:CP}
A Caccioppoli partition of $\Omega$ is a countable partition
$\EE = \{E_i\}_{i=1}^\infty$ of $\Omega$ in sets of 
(positive Lebesgue measure and) finite perimeter with
$\sum_{i=1}^\infty \Per (E_i, \Omega) < \infty$. 

For each Caccioppoli partition $\EE$ we define its set of interfaces as
\[
J_{\EE}:= \bigcup_{i\in\N} \partial^* E_i\, .
\]
The partition $\EE$ is said to be minimal if 
\[
\H^{n-1}(J_{\mathscr E})\leq \H^{n-1}(J_{\mathscr F})
\]
for all Caccioppoli partitions ${\mathscr F}$ for which there exists
an open subset $\Omega'\subset\subset \Omega$ with 
\[
\sum_{i=1}^\infty\LL^n\left((F_i\triangle E_i)
\cap(\Omega\setminus \Omega')\right)=0.
\]
\end{definition}

\begin{definition}\label{d:Cacc_sing_set}
Given a Caccioppoli partition $\EE$ we define its singular set
$\Sigma_\EE$ as the set of points for which the approximate tangent
plane to $J_\EE$ does not exist.  
\end{definition}
A characterization of the singular set $\Sigma_\EE$ for minimal Caccioppoli 
partitions in the spirit of $\e$-regularity results is provided in the ensuing 
statement (cp. with \cite[Corollary 4.2.4]{L2} and \cite[Theorem III.6.5]{M} ). 
\begin{theorem}\label{t:Cacc_sing_est}
Let $\O$ be an open set and $\EE=\{E_i\}_{i\in\N}$ a minimal Caccioppoli 
partition of $\O$. 

Then, there exists a dimensional constant $\e=\e(n)>0$ such that 
\begin{equation}\label{e:caratterizzazione}
\Sigma_\EE=\left\{x\in \O\cap\ov{J_\EE}:\,
\inf_{B_\rho(x)\subset\subset\O}e(x,\rho)\ge \e  \right\},
\end{equation}
where $e(x,\rho)$ denotes the spherical excess of $\EE$ at the 
point $x\in J_\EE$ at the scale $\rho>0$, that is
\[
e(x,\rho):=\min_{\nu\in \mathbb S^{N-1}}\frac{1}{\rho^{n-1}}
\int_{B_\rho(x)\cap J_\EE}\frac{|\nu_\EE(y)-\nu|^2}{2}d\H^{n-1}(y).
\]
\end{theorem}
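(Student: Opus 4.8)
The plan is to establish the two inclusions in \eqref{e:caratterizzazione} separately, exploiting the $\e$-regularity theory for minimal Caccioppoli partitions from \cite{MT,L1,L2}. First recall the decay/regularity statement one gets from that theory: there is a dimensional $\e_0=\e_0(n)>0$ such that if $x\in\ov{J_\EE}$ and $e(x,\rho_0)<\e_0$ for some $B_{\rho_0}(x)\subset\subset\O$, then in a smaller ball $B_r(x)$ the interface $J_\EE$ coincides with a $C^{1,\a}$ hypersurface; in particular the approximate tangent plane to $J_\EE$ exists at $x$, so $x\notin\Sigma_\EE$. This immediately gives the inclusion ``$\supseteq$'' once we read it contrapositively: if $x$ is such that $\inf_{B_\rho(x)\subset\subset\O}e(x,\rho)<\e_0$, then $x\notin\Sigma_\EE$; equivalently $\Sigma_\EE\subseteq\{x: \inf_\rho e(x,\rho)\ge\e_0\}$. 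So with the choice $\e\le\e_0$ one direction is done.

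For the reverse inclusion ``$\subseteq$'', suppose $x\in\O\cap\ov{J_\EE}$ is a point where the approximate tangent plane to $J_\EE$ \emph{does} exist, i.e. $x\notin\Sigma_\EE$; I want to produce a scale $\rho$ with $e(x,\rho)<\e$. The natural route is a blow-up argument: existence of the approximate tangent plane $\pi$ at $x$ means that the rescaled interfaces $(J_\EE-x)/\rho$ converge (in the appropriate measure-theoretic sense, e.g. as rectifiable measures or in the sense of $L^1$-convergence of the partition) to the hyperplane $\pi$ with multiplicity one as $\rho\downarrow 0$. Using lower semicontinuity of the excess-type functional and the fact that the excess of a hyperplane with the constant orienting normal is zero, one deduces $\limsup_{\rho\downarrow 0}e(x,\rho)=0$; hence for $\rho$ small enough $e(x,\rho)<\e$, which shows $x$ lies in the complement of the right-hand set of \eqref{e:caratterizzazione}. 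Combining the two inclusions and taking $\e=\e_0$ (or any smaller dimensional constant) proves the claimed identity.

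The main technical point — the step I expect to be the real obstacle — is the blow-up convergence in the second inclusion, namely translating ``the approximate tangent plane to $J_\EE$ exists at $x$'' into the quantitative smallness of the spherical excess $e(x,\rho)$. One has to be careful that Definition~\ref{d:Cacc_sing_set} only posits existence of the tangent plane as a rectifiable set, whereas $e(x,\rho)$ involves the measure-theoretic outer normals $\nu_\EE$ of the sets $E_i$ through the reduced boundaries $\partial^*E_i$; so one needs to know that near a tangent point the various $\partial^*E_i$ glue coherently along $\pi$ (locally only two chambers are involved) and that the blow-ups of the normals converge to $\pm\nu$. This is exactly where one invokes the structure theorem for sets of finite perimeter (Federer's theorem, $\H^{n-1}$-a.e.\ point of $J_\EE$ is a reduced-boundary point of precisely two of the $E_i$ with opposite normals) together with the density lower bounds and the compactness properties of minimal partitions from \cite{MT,L1,L2}. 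Once this identification is in place, the convergence $e(x,\rho)\to 0$ and hence the inclusion follow from standard weak-$\star$ convergence and lower semicontinuity arguments in geometric measure theory; the rest is bookkeeping.
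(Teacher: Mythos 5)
The paper does not actually prove Theorem~\ref{t:Cacc_sing_est}: it is quoted from the literature (see the pointers to \cite[Corollary 4.2.4]{L2} and \cite[Theorem III.6.5]{M} immediately before the statement), so there is no in-paper argument to compare yours against line by line. That said, your outline reconstructs the standard proof and is essentially the right one: the inclusion $\Sigma_\EE\subseteq\{x:\,\inf_{\rho}e(x,\rho)\ge\e\}$ is exactly the contrapositive of the $\e$-regularity theorem for minimal Caccioppoli partitions, and the reverse inclusion reduces to showing that $e(x,\rho)\to0$ as $\rho\downarrow 0$ at every point where the approximate tangent plane to $J_\EE$ exists.

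One step is stated with the inequality pointing the wrong way. Lower semicontinuity of the excess under weak-$\star$ convergence of the blow-ups only yields $\liminf_{\rho\downarrow0}e(x,\rho)\ge 0$, which is vacuous; to conclude $\limsup_{\rho\downarrow0}e(x,\rho)=0$ you need the excess to actually \emph{converge} along the blow-up sequence, and for that no mass may be lost: you need $\rho^{1-n}\H^{n-1}(J_\EE\cap B_\rho(x))\to\omega_{n-1}$ together with $\rho^{1-n}\int_{J_\EE\cap B_\rho(x)}\nu_\EE\,d\H^{n-1}\to\omega_{n-1}\nu_0$ for a fixed unit vector $\nu_0$, after which $e(x,\rho)$ is a difference of converging quantities with limit zero. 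The first convergence is precisely the multiplicity-one tangent-plane hypothesis; the second is where the point you correctly flag as the real obstacle enters, namely the coherent orientation of the normals near $x$ (Federer's two-chamber structure of $J_\EE$ plus the compactness of minimal partitions with convergence of perimeters, which is where minimality is genuinely used). With ``lower semicontinuity'' replaced by this convergence-of-excess argument, your plan is complete and agrees with how the cited references establish the result.
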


We recall next a result that is probably well-known in literature; 
we provide the proof for the sake of completeness. 
\begin{theorem}\label{t:Caccest}
Let $\EE$ be a minimal Caccioppoli partition in $\O$, 
then $\mathrm{dim}_{\H}\Sigma_\EE\le n-2$.

If, in addition, $n=2$, then $\Sigma_\EE$ is locally finite.
\end{theorem}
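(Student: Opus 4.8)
The plan is to prove both statements at once by Federer's dimension-reduction scheme, using Theorem~\ref{t:Cacc_sing_est} as the $\e$-regularity input. I would rely on three facts from the regularity theory of minimal Caccioppoli partitions (see \cite{MT,L1,L2} and \cite[Chapter~III]{M}): \textbf{(a)} for $x\in\O\cap\ov{J_\EE}$ the normalized density $\Theta_\EE(x,r):=\H^{n-1}(J_\EE\cap B_r(x))/(\omega_{n-1}r^{n-1})$ is monotone nondecreasing in $r$, so that every blow-up sequence $\EE_{x,r_k}$ (i.e. the rescaled partitions $\{(E_i-x)/r_k\}_i$, with $r_k\downarrow 0$) subconverges --- locally in $L^1$, with convergence of the interface measures, hence in the local Hausdorff distance of the closures $\ov{J}$ --- to a minimal Caccioppoli partition $\mathbf C$ of $\R^n$ that is a \emph{cone} with vertex $0$; \textbf{(b)} the class of minimal Caccioppoli partitions of locally equibounded perimeter is compact under such convergence, limits are minimal, no perimeter is lost in the limit (by the Ahlfors-type density bounds), and consequently --- since by \eqref{e:caratterizzazione} membership in $\Sigma_\EE$ is detected by the excess $e(\cdot,\rho)$, which is continuous along this convergence --- the singular sets are upper semicontinuous, $\limsup_k\Sigma_{\EE_k}\subseteq\Sigma_{\EE_\infty}$ in the Kuratowski sense; \textbf{(c)} for $x\in\O\cap\ov{J_\EE}$ one has $x\notin\Sigma_\EE$ if and only if some (equivalently, by \textbf{(a)}, every) blow-up of $\EE$ at $x$ is a hyperplane.

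\medskip
\noindent The base of the induction is the classification of minimal cones in low dimension. For $n=1$, $J_\EE$ is a locally finite set of points, each admitting a trivial approximate tangent, so $\Sigma_\EE=\emptyset$. For $n=2$, a minimal Caccioppoli cone $\mathbf C$ has interface a locally finite union of half-lines issuing from the origin; testing minimality against the competitors obtained by rewiring the configuration inside a small disc centred at $0$ (joining the points where the half-lines meet $\partial B_\rho$ by straight segments) forces the interface to be either a single straight line, whence $\Sigma_{\mathbf C}=\emptyset$, or three half-lines meeting pairwise at $120^\circ$, whence $\Sigma_{\mathbf C}=\{0\}$; in any case $\Sigma_{\mathbf C}\subseteq\{0\}$. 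Hence minimal cones in $\R^m$ are everywhere regular for $m\le 1$. (One may instead quote this classification directly from \cite{L1,L2}.)

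\medskip
\noindent Now the reduction. Suppose, for a contradiction, that $\H^s(\Sigma_\EE)>0$ for some $s>n-2$. By the standard Federer iteration --- blowing up repeatedly at points of positive upper $\H^s$-density of the singular set, using \textbf{(a)}--\textbf{(c)} --- one produces a minimal Caccioppoli partition $\mathbf C$ of $\R^n$ that is a cone, invariant under translations along a linear subspace $V$ with $\dim V\ge s$ and with $0\in\Sigma_{\mathbf C}$; since $\dim V$ is an integer and $s>n-2$, this forces $\dim V\ge n-1$. Writing $\mathbf C=V\times\mathbf C'$ with $\mathbf C'$ a minimal Caccioppoli cone in $\R^{n-\dim V}$ (so in $\R^0$ or $\R^1$), the identity $\Sigma_{\mathbf C}=V\times\Sigma_{\mathbf C'}$ together with $0\in\Sigma_{\mathbf C}$ gives $0\in\Sigma_{\mathbf C'}$, contradicting the base case. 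Therefore $\dim_\H\Sigma_\EE\le n-2$.

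\medskip
\noindent For $n=2$ the bound just proved only gives $\dim_\H\Sigma_\EE\le 0$, and I would upgrade it to local finiteness by ruling out accumulation points. If $x_j\in\Sigma_\EE$ with $x_j\to x_0\in\O$ and $x_j\neq x_0$, set $r_j:=2|x_j-x_0|\downarrow 0$ and, passing to a subsequence, let $\EE_{x_0,r_j}\to\mathbf C$, a minimal cone in $\R^2$, while $(x_j-x_0)/r_j\to\xi$ with $|\xi|=1/2$. By the upper semicontinuity in \textbf{(b)}, $\xi\in\Sigma_{\mathbf C}$, so $\Sigma_{\mathbf C}\not\subseteq\{0\}$, contradicting the planar classification. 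Thus $\Sigma_\EE$ has no accumulation point in $\O$; being moreover relatively closed in $\O$ by Theorem~\ref{t:Cacc_sing_est}, it is locally finite. The main obstacle in this scheme is ingredient \textbf{(b)} (together with the monotonicity formula \textbf{(a)}): one must know that the compactness theory for minimal Caccioppoli partitions is compatible with the characterization \eqref{e:caratterizzazione} --- in particular that no perimeter concentrates or is lost in blow-up limits and that singular sets pass to the limit in the Hausdorff sense --- after which the planar cone classification and the Federer iteration are entirely routine.
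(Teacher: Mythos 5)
Your proposal is correct and follows essentially the same route as the paper: a Federer dimension-reduction argument whose inputs are exactly the ones you list --- blow-up of minimal Caccioppoli partitions to minimal cones (\cite[Theorem 3.5]{L1}, \cite[Theorem 4.4.5 (a)]{L2}), upper semicontinuity of the singular sets via the continuity of the excess and the characterization \eqref{e:caratterizzazione}, and the triviality of the singular set in the hyperplane/low-dimensional base case. The only difference is one of packaging: the paper simply verifies the hypotheses of the abstract reduction principle \cite[Theorem A.4]{S}, which also delivers local finiteness for $n=2$ directly, whereas you unwind the iteration (cone-splitting, planar classification, accumulation-point argument) by hand.
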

\begin{proof}
We apply  the abstract version of Federer's reduction argument in 
\cite[Theorem A.4]{S} with the set of functions
\[
\mathcal F=\{\chi_{J_\EE}:\, \EE \text{ is a minimal Caccioppoli partition}\}
\]
endowed with the convergence 
\[
\chi_{J_{\EE_h}}\to\chi_{J_\EE}\,\Longleftrightarrow\,
\lim_{h\uparrow\infty}\int_{J_{\EE_h}}g\,d\H^{n-1}=\int_{J_\EE}g\,d\H^{n-1},\quad
\text{for all }g\in C^1_c(\O).
\]
and singularity map $\mathrm{sing}(\chi_\EE)=\Sigma_\EE$. 

It is easy to see that condition $A.1$ (closure under scaling) 
and $A.3(2)$ hold true. Moreover, the blow-ups of a minimal Caccioppoli 
partition converge to a minimizing cone (see \cite[Theorem 3.5]{L1}, or 
\cite[Theorem 4.4.5 (a)]{L2}), so that $A.2$ holds as well. 
About $A.3(1)$, we notice that the singular set of an hyperplane is empty. 
Eventually, if a sequence $(\chi_{J_{\EE_h}})_{h\in\N}\subseteq \mathcal F$ 
converges to $\chi_{J_\EE}$ and $(x_h)_{h\in\N}$ converges to $x$, with 
$x_h\in \S_{\EE_h}$ for all $h$, then by the continuity of the excess and
the characterization in \eqref{e:caratterizzazione}, $x\in\Sigma_\EE$, so 
that condition $A.3(3)$ is satisfied as well.

To conclude, we recall that \cite[Theorem A.4]{S} itself ensures 
that the set $\S_\EE$ is locally finite being in this setting 
$\mathrm{dim}_{\H}\Sigma_\EE=0$. 
\end{proof}

\section{Proof of the main result}\label{s:stimasing}

We are now ready to prove the main result of the note following the approach 
exploited in \cite[Theorem~5.6]{AFH}. To this aim we recall that Ambrosio, 
Fusco \& Pallara (see \cite[Theorems 8.1-8.3]{AFP00}) characterized alternatively 
the singular set $\S_u$ as follows
\begin{equation}\label{e:sigmau}
\Sigma_u=\{x\in\overline{S_u}:\,\liminf_{\rho\downarrow 0}
\left(\mathscr{D}(x,\rho)+\mathscr{A}(x,\rho)\right)\ge\e_0\},
\end{equation}
where $\e_0$ is a dimensional constant, and the scaled Dirichlet 
energy and the scaled mean-flatness are respectively defined as
\[
\mathscr{D}(x,\rho):=\rho^{1-n}\int_{B_\rho(x)}|\nabla u|^2dy,\quad
\mathscr{A}(x,\rho):=\rho^{-1-n}\min_{T\in\Pi }\int_{S_u\cap B_\rho(x)}
\mathrm{dist}^2(y,T)d\H^{n-1}(y),
\]
with $\Pi$ the set of all affine $(n-1)$-planes in $\R^n$.

\begin{proof}[Proof of Theorem~\ref{t:main}]
We argue by contradiction: suppose that there exists $s> n-2$ such that 
$\H^s(\Sigma^\prime_u)>0$. 
From this, we infer that $\H^s_\infty(\Sigma^\prime_u)>0$, 
and moreover that for $\H^s$-a.e. $x\in \Sigma^\prime_u$ it holds
\begin{equation}\label{e:limsup}
\limsup_{\rho\downarrow 0^+}
\frac{\H^s_\infty(\Sigma^\prime_u\cap B_\rho(x))}{\rho^s}
\ge \frac{\omega_s}{2^s}
\end{equation}
(see for instance \cite[Theorem 2.56 and formula (2.43)]{AFP00} or 
\cite[Lemma III.8.15]{M}). 
Without loss of generality, suppose that \eqref{e:limsup} holds at $x=0$, and 
consider a sequence $\rho_h\downarrow 0$ for which
\begin{equation}\label{e:assurdo}
\H^s_\infty(\Sigma^\prime_u\cap B_{\rho_h})\ge\frac{\omega_s}{2^{s+1}}\rho_h^s
\qquad\text{for all }h\in\N.
\end{equation}
\cite[Theorem~13]{DeLF} provides a subsequence, not relabeled for 
convenience, and a minimal Caccioppoli partition $\EE$ such that 
\begin{equation}\label{e:DeLF}
\H^{N-1}\llcorner\,\rho_h^{-1}S_u
\stackrel{\ast}{\rightharpoonup}\H^{N-1}\llcorner\, J_\EE,\,
\text{ and }\quad\rho_h^{-1}\ov{S_u}\to\ov{J_\EE}\,\,\text{ locally Hausdorff}.
\end{equation}
In turn, from the latter we claim that if $\mathcal{F}$ is any open cover  
of $\Sigma_\EE\cap \ov B_1$, then for some $h_0\in\N$ 
\begin{equation}\label{e:usc}
\rho_h^{-1}\Sigma^\prime_u\cap\ov B_1\subseteq 
\cup_{F\in\mathcal{F}}F\qquad \text{for all } h\ge h_0. 
\end{equation}
Indeed, if this is not the case we can find a sequence 
$x_{h_j}\in \rho_{h_j}^{-1}\Sigma^\prime_u\cap\ov B_1$ 
converging to some point $x_0\notin\S_\EE$. 
If $\pi^\EE_{x_0}$ is the approximate tangent plane to $J_\EE$ at $x_0$, 
that exists by the very definition of $\Sigma_\EE$, then for some $\rho_0$ we have
\[
\rho^{1-n}\int_{B_\rho(x_0)\cap J_\EE}
\mathrm{dist}^2(y,\pi^\EE_{x_0})d\H^{n-1}<\e_0,
\quad \text{ for all }\rho\in(0,\rho_0).
\]
In turn, from the latter inequality it follows for $\rho\in(0,\rho_0\wedge 1)$
\[
\limsup_{j\uparrow\infty}\int_{B_\rho(x_{h_j})\cap\rho_{h_j}^{-1}S_u}
\mathrm{dist}^2(y,\pi^\EE_{x_0})\,d\H^{n-1}<\e_0.
\]
Therefore, as $x_{h_j}\in\rho_{h_j}^{-1}\Sigma^\prime_u$, we get for 
$j$ large enough 
\[
\limsup_{\rho\downarrow 0}
\left(\mathscr{D}(x_{h_j},\rho)+\mathscr{A}(x_{h_j},\rho)\right)<\e_0,
\]
a contradiction in view of the characterization of the singular set in \eqref{e:sigmau}. 

To conclude, we note that by \eqref{e:usc} we get
\[
\H^s_\infty(\S_\EE\cap\ov{B_1})\geq
\limsup_{h\uparrow\infty}\H^s_\infty(\rho_h^{-1}\Sigma^\prime_u\cap\ov{B_1});
\]
given this, \eqref{e:assurdo} and \eqref{e:DeLF} yield that
\[
\H^s(\Sigma_\EE\cap \ov{B_1})\ge\H^s_\infty(\Sigma_\EE\cap\ov{B_1})\ge
\limsup_{h\uparrow\infty}\H^s_\infty(\rho_h^{-1}\Sigma^\prime_u\cap\ov{B_1})\ge
\frac{\omega_s}{2^{s+1}},
\]
thus contradicting Theorem~\ref{t:Caccest}.
\end{proof}

\begin{remark}
In two dimensions we can actually prove that the set  
$\Sigma^\prime_u$ of triple-junctions is at most countable building 
upon some topological arguments.
This claim follows straightforwardly from the compactness result \cite[Theorem~13]{DeLF}, 
David's $\e$-regularity theorem \cite[Proposition~60.1]{David}, and 
a direct application of Moore's triod theorem showing that in the plane 
every system of disjoint triods, i.e. unions of three Jordan arcs that 
have all one endpoint in common and otherwise disjoint, is at most countable
(see \cite[Theorem~1]{Moore} and \cite[Proposition~2.18]{pommerenke}). 
Despite this, we are not able to infer that $\S^\prime_u$ is locally finite as 
in the case of minimal Caccioppoli partitions (cp. with Theorem~\ref{t:Caccest}). 
Indeed, if on one hand we can conclude that every convergent sequence 
$(x_j)_{j\in\N}\subset\S^\prime_u$ has a limit $x_0\notin\S^\prime_u$ thanks to 
\cite[Proposition~11 and Lemma~12]{DeLF}; on the other hand, we cannot exclude 
that the limit point $x_0$ is a crack-tip, i.e. it belongs to the set 
$\S_u\setminus\S^\prime_u=\{x\in\Sigma_u:\,\liminf_{\rho\downarrow 0}\mathscr{D}(x,\rho)>0\}$.

The same considerations above apply in three dimensions as well for points whose 
blow-up is a $\mathbb{T}$ cone, i.e. a cone with vertex the origin constructed upon the 1-skeleton
of a regular tetrahedron. The latter claim follows thanks to \cite[Theorem~13]{DeLF}, 
the $3$-d extension of David's $\e$-regularity result by Lemenant in \cite[Theorem~8]{lemenant}, 
and a suitable extension of Moore's theorem on triods established by Young in \cite{Young}.

Let us finally point out that we employ topological arguments to compensate for monotonicity
formulas, that would allow us to exploit Almgren's stratification type results and get, actually, 
a more precise picture of the set $\Sigma^\prime_u$ (cp.~with \cite[Theorem 3.2]{White}).
\end{remark}


\end{document}